\documentclass[reqno,12pt,a4paper]{amsart}

\usepackage[utf8]{inputenc}
\usepackage[T1]{fontenc}
\usepackage[english]{babel}
\usepackage{lmodern}
\usepackage{geometry}
\usepackage{amsmath,amssymb,amsthm,amsfonts,mathtools}
\usepackage{microtype}
\usepackage{enumitem}
\usepackage{hyperref}
\hypersetup{
  colorlinks=true,
  linkcolor=blue!60!black,
  citecolor=blue!60!black,
  urlcolor=blue!60!black
}
\usepackage{xcolor}

\usepackage{verbatim}
\usepackage{url}
\usepackage{tikz}
\usepackage{setspace}
\usetikzlibrary{arrows}

\newcommand{\Log}{\operatorname{Log}}
\newcommand{\Arg}{\operatorname{Arg}}
\newcommand{\claimA}{\noindent {\bf Claim A.} }
\newcommand{\claimB}{\noindent {\bf Claim B.} }
\newcommand{\proofoftheclaim}{\noindent {\em Proof of the claim.} }

\theoremstyle{plain}
\newtheorem{theorem}{Theorem}[section]
\newtheorem{proposition}[theorem]{Proposition}

\newtheorem{corollary}[theorem]{Corollary}
\newtheorem{definition}[theorem]{Definition}

\newtheorem{example}[theorem]{Example}

\subjclass[2020]{11J86 (primary), 11J81, 11J85, 11J25 (secondary)}
\title{Arithmetic properties of arguments of algebraic numbers on the unit circle}
\keywords{Diophantine numbers, Diophantine approximation, Transcendental numbers, Baker-Wüstholz Theorem, Gelfond-Schneider Theorem, Hermite-Lindemann Theorem.}

\author[G.~C.~G.~Ferreira]{G.~C.~G.~Ferreira}
\address{Departamento de Matem\'{a}tica\\
Universidade Federal de Ouro Preto\\
Ouro Preto, MG\\
35402--136\\
Brazil\\
}
\email{geraldocesar@ufop.edu.br }

\author[S.~Ribas]{S.~Ribas}
\address{Departamento de Matem\'{a}tica\\
Universidade Federal de Ouro Preto\\
Ouro Preto, MG\\
35402--136\\
Brazil\\
}
\email{savio.ribas@ufop.edu.br }

\date{\today}

\begin{document}

\maketitle

\begin{abstract}
    An irrational number $\theta$ is called Diophantine if there exist $c>0$ and $\tau < \infty$ such that $\left| \theta - \frac{p}{q} \right|  \ge \frac{c}{q^\tau}$ holds for every $(p,q) \in \mathbb{Z} \times \mathbb{N}$. 

    In this paper, we study Diophantine and transcendence properties of some real numbers. Using lower bounds for linear forms in logarithms, we show that if $\beta \in \mathbb{C}$ is an algebraic number with $|\beta|=1$ that is not a root of unity, then $\frac{\Arg(\beta)}{2\pi}$ is Diophantine. We also prove that if $\beta = e^{i\alpha}$ is algebraic, then $\frac{\alpha}{\pi}$ is either rational or transcendental.
    
    As a consequence, we obtain that if $n \ge 2$ is an integer and $\alpha \in \left(0,\frac{\pi}{2}\right)$ satisfies $n \tan \alpha = \tan(n \alpha)$, then $\frac{\alpha}{2\pi}$ is both Diophantine and transcendental, and $\alpha$ is transcendental. This extends a result of \cite{Cyr2012}, which establishes that $\frac{\alpha}{2\pi}$ is irrational.
\end{abstract}

\section{Introduction}

Let $\theta$ be a real number. The {\em irrationality exponent} of $\theta$ is 
$$\mu(\theta) = \sup\left\{ \mu > 0 \colon \left| \theta - \frac{p}{q} \right| < \frac{1}{q^\mu} \text{ holds for infinitely many pairs } (p,q) \in \mathbb{Z} \times \mathbb{N} \right\}.$$
We say that $\theta$ is a {\em Liouville number} whenever $\mu(\theta) = \infty$. On the other hand, $\theta$ is called {\em Diophantine} if there exist $c > 0$ and $\tau < \infty$ such that, for every $(p,q) \in \mathbb{Z} \times \mathbb{N}$, 
\begin{equation}\label{ineq:dioph}
    \left| \theta - \frac{p}{q} \right| \ge \frac{c}{q^\tau}.
\end{equation}
Let $\mathcal L$ be the set of Liouville numbers and let $\mathcal D$ be the set of Diophantine numbers.

We observe that if $\theta = \frac{a}{b}$ is rational, then $\mu(\theta) = 1$, and one may choose $\frac{p}{q} = \frac{a}{b}$ so that \eqref{ineq:dioph} fails to hold. Hence, no rational number is either Diophantine or Liouville. Among irrational numbers, the sets $\mathcal L$ and $\mathcal D$ form a partition.

A very strong result due to Roth \cite{Roth1955} states that $\mu(\theta) = 2$ for every algebraic number $\theta$. In particular, every Liouville number must be transcendental, or equivalently, every algebraic irrational number is Diophantine.

Arithmetic properties such as rationality or irrationality, algebraicity or transcendence,
and Diophantine or Liouville behavior play a fundamental role in several areas of mathematics. They arise naturally in Diophantine approximation, transcendence theory, and number theory, but also have significant implications in dynamics, geometry, and analysis.

In many contexts, good arithmetic properties govern rigidity versus flexibility phenomena and the qualitative behavior of dynamical systems. In what follows, we introduce a simple trigonometric equation whose solutions exhibit
remarkable arithmetic features and give rise to several geometric and dynamical implications,
which will be discussed in the subsequent paragraphs.

Let $n \in \mathbb{Z}$ and consider the equation 
\begin{equation}\label{eq:main}
    n \tan \alpha = \tan(n \alpha).
\end{equation}
If $n\in \{0,1\}$, then every $\alpha \in \mathbb{R}$ is a solution. Furthermore, $\alpha = 0$ is a solution for every $n \in \mathbb{Z}$. Since the tangent function is odd and $\pi$-periodic, we may restrict attention to the case $n \ge 2$ and $\alpha \in \left(0,\frac{\pi}{2}\right)$. In \cite{Cyr2012}, Cyr proved that $\frac{\alpha}{2\pi}$ is irrational.

This equation appears naturally in several topics involving plane curves and billiard dynamics. Despite its elementary appearance, its solutions encode subtle rigidity phenomena with both geometric and dynamical consequences.

One prominent context in which this equation appears is the {\em theory of bicycle curves}. 
In an effort to classify noncircular bicycle curves, Tabachnikov \cite{Tabachnikov2006} proved that the circle admits a nontrivial infinitesimal deformation as a smooth plane bicycle curve with rotation number $\frac{\alpha}{\pi}$ if and only if \eqref{eq:main} holds for some integer $n \ge 2$. Thus, the solutions of this equation characterize the values of the rotation number for which the circle fails to be rigid in this geometric setting. In light of the Cyr theorem, it follows that the circle is rigid as a bicycle curve for any rational rotation number, except in the case $\alpha = \frac{\pi}{2}$, where any curve of constant width is a bicycle curve.

The same equation arises independently in the context of {\em mathematical billiards}. In his study of the dynamics of the billiard map inside the cross section of an infinite cylinder that floats in neutral equilibrium in any orientation, Gutkin \cite{Gutkin2010} showed that for a regular, noncircular billiard table with boundary $\Gamma$, the existence of a constant-angle caustic imposes the arithmetic constraint that there exist an integer $n \ge 2$ and a real parameter $\alpha$ satisfying \eqref{eq:main}. Moreover, under additional assumptions, the arithmetic nature of $\frac{\alpha}{\pi}$ plays a decisive role in determining whether the induced billiard dynamics on the caustic is rational or irrational.

More recently, equations of the form \eqref{eq:main} have also appeared in the study of {\em self-Bäcklund curves}. In this setting, the existence of non-trivial infinitesimal deformations of central conics as a self-Bäcklund is again governed by the solvability of \eqref{eq:main} for certain values of $n \ge 4$ (see \cite[Section~3]{BialyBorTabachnikov2022}).

The examples discussed above show that arithmetic properties of parameters appearing in geometric and dynamical problems can strongly influence their behavior. This perspective motivates the present work, which focuses on the arithmetic properties of real numbers arising in such contexts.

Before stating the main results of this paper, we introduce some additional notation. Let $z \in \mathbb{C} \setminus \{0\}$. We denote by $\Log z = \ln|z| + i\Arg(z)$ the principal branch of the logarithm, where $\Arg(z) \in (-\pi,\pi]$. 

The main results of this paper are the following.

\begin{theorem}\label{thm:general}
    Let $\beta$ be an algebraic number with $|\beta|=1$ which is not a root of unity. Then $\frac{\Arg(\beta)}{2\pi} \in \left(-\frac{1}{2},\frac{1}{2}\right]$ is a Diophantine number.
\end{theorem}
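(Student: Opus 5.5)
Write $\theta = \frac{\Arg(\beta)}{2\pi}$ and fix $\lambda = \Log \beta = i\Arg(\beta)$. Since $|\beta|=1$, this is purely imaginary. The idea is to turn a good rational approximation $\frac{p}{q}$ of $\theta$ into a small nonzero value of the linear form in logarithms
$$\Lambda = q\,\Log\beta - 2p\,\Log(-1) = i\bigl(q\Arg(\beta) - 2\pi p\bigr) = 2\pi i q\left(\theta - \frac{p}{q}\right),$$
using $\Log(-1) = i\pi$. Then $|\Lambda| = 2\pi q\,|\theta - \frac{p}{q}|$. A Baker--Wüstholz type lower bound for $|\Lambda|$ will then give the Diophantine inequality \eqref{ineq:dioph}.

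\textbf{Nonvanishing.} First, $\theta$ is irrational. If $\theta = a/b$, then $\beta = e^{2\pi i a/b}$ and $\beta^b = 1$, contradicting that $\beta$ is not a root of unity. Hence $\Lambda \neq 0$ for every $(p,q)\in\mathbb{Z}\times\mathbb{N}$. One could also argue directly: $\Lambda = 0$ would force $q\Arg\beta = 2\pi p$, so $\beta^q = 1$.

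\textbf{Applying the lower bound.} The algebraic numbers involved are $\alpha_1 = \beta$ and $\alpha_2 = -1$. They lie in the fixed field $K = \mathbb{Q}(\beta)$, of degree $d$. Their heights are fixed, and so is the choice of logarithms (the principal ones). The coefficients are $b_1 = q$ and $b_2 = -2p$. The Baker--Wüstholz theorem then yields a constant $C = C(\beta) > 0$, depending only on $d$ and the heights, such that
$$\ln|\Lambda| \ge -C \ln B, \qquad B = \max\{|q|, 2|p|, e\}.$$
So $|\Lambda| \ge B^{-C}$.

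\textbf{Reduction to the relevant range of $p$.} The bound above is in terms of $B$, while \eqref{ineq:dioph} needs a bound in terms of $q$, so I restrict to the $p$ that matter. Since $|\theta| \le \frac12$, if $|\theta - p/q| \ge 1$ the inequality holds trivially with $c \le 1$ and any $\tau \ge 0$. Otherwise $|p| \le q(|\theta|+1) \le 2q$, so $B \le \max\{4q, e\} \le 4e\,q$. Combining,
$$\left|\theta - \frac{p}{q}\right| = \frac{|\Lambda|}{2\pi q} \ge \frac{(4e q)^{-C}}{2\pi q} = \frac{c'}{q^{C+1}}.$$
Taking $c = \min\{1, c'\}$ and $\tau = C+1$ proves \eqref{ineq:dioph}.

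The main obstacle is purely bookkeeping: stating the Baker--Wüstholz bound correctly, with its dependence on $\max|b_j|$ and the fixed heights of $\beta$ and $-1$, including the degenerate height $h(-1) = 0$. In Baker--Wüstholz this is handled by the modified height $h'(\alpha) = \max\{h(\alpha), |\log\alpha|/d, 1/d\}$, so nothing actually breaks. The rest is routine.
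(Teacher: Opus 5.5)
Your proposal is correct and follows essentially the same route as the paper: the same linear form $q\Log\beta - 2p\Log(-1)$, nonvanishing because $\beta^q \neq 1$, and Baker--Wüstholz applied with $\alpha_1=\beta$, $\alpha_2=-1$. The only difference is cosmetic. You get $|p|\le 2q$ by discarding the trivial case $|\theta - p/q|\ge 1$, while the paper bounds the nearest integer $p'$ to $q\theta$ and then uses $|q\theta - p| \ge |q\theta - p'|$ for all $p$.
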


By applying the previous result to the solutions of equation \eqref{eq:main}, together with the classical theorems described in Section \ref{sec:pre}, we obtain the following arithmetic statements.

\begin{theorem}\label{thm:3props}
    Let $n \ge 2$ be an integer and let $\alpha \in \left( 0, \frac{\pi}{2} \right)$ be a solution of \eqref{eq:main}. Then the following statements hold.
    \begin{enumerate}[label=(\alph*)]
        \item $\frac{\alpha}{2\pi}$ is a Diophantine number.
        \item $\frac{\alpha}{2\pi}$ is transcendental.
        \item $\alpha$ is transcendental.
    \end{enumerate}
\end{theorem}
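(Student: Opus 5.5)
The plan is to reduce everything to Theorem \ref{thm:general} and to the classical transcendence results (Gelfond--Schneider and Hermite--Lindemann) by exhibiting an algebraic number on the unit circle whose argument is a simple multiple of $\alpha$.

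\emph{Step 1: an algebraic number on the unit circle.} Put $\beta = e^{2i\alpha}$ and $t=\tan\alpha \in (0,\infty)$. Then
\[
\beta = \frac{1+it}{1-it}, \qquad \tan(n\alpha) = \frac{\operatorname{Im}(1+it)^n}{\operatorname{Re}(1+it)^n}.
\]
Hence \eqref{eq:main} becomes the polynomial equation
\[
\operatorname{Im}(1+it)^n = n t \operatorname{Re}(1+it)^n
\]
with integer coefficients. Its leading terms do not cancel, so it is a nontrivial polynomial, and $t$ is algebraic. (If $\cos(n\alpha)=0$, then \eqref{eq:main} has no meaning, so this case does not arise.) Consequently $\beta=(1+it)/(1-it)$ is algebraic and $|\beta|=1$. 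Since $2\alpha\in(0,\pi)$, we have $\Arg(\beta)=2\alpha$.

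\emph{Step 2: $\beta$ is not a root of unity.} This is exactly Cyr's theorem \cite{Cyr2012}: $\frac{\alpha}{2\pi}$ is irrational, hence $\frac{\Arg\beta}{2\pi}=\frac{\alpha}{\pi}$ is irrational. If we want to avoid citing it, one could try to show directly that $t=\tan\alpha$ with $\alpha\in\pi\mathbb{Q}$ cannot satisfy the polynomial. We expect that direct route to be the only real obstacle, so we will rely on Cyr's result.

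\emph{Step 3: the three statements.}
\begin{itemize}
\item[(a)] By Theorem \ref{thm:general}, $\frac{\alpha}{\pi}$ is Diophantine. Diophantineness is preserved under multiplication by a nonzero rational: if $|\theta - p/q|\ge c/q^\tau$ for all $p,q$, then
\[
\left|\frac{\theta}{2}-\frac pq\right| = \frac12\left|\theta-\frac{2p}{q}\right|\ge \frac{c}{2q^\tau}.
\]
So $\frac{\alpha}{2\pi}$ is Diophantine.
\item[(b)] Suppose $\frac{\alpha}{\pi}=\gamma$ is algebraic. It is irrational by Step 2. Then $\beta = e^{2i\alpha}=(e^{i\pi})^{2\gamma}=(-1)^{2\gamma}$, which by Gelfond--Schneider is transcendental, contradicting Step 1. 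Hence $\frac{\alpha}{\pi}$, and so $\frac{\alpha}{2\pi}$, is transcendental.
\item[(c)] If $\alpha$ were algebraic and nonzero, Hermite--Lindemann would make $e^{2i\alpha}=\beta$ transcendental, again contradicting Step 1.
\end{itemize}
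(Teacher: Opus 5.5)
Your proof is correct and follows the paper's route. You show $\tan\alpha$ is algebraic and hence obtain an algebraic number on the unit circle, use Cyr's theorem to rule out roots of unity, and then apply Theorem~\ref{thm:general}, Gelfond--Schneider and Hermite--Lindemann for (a), (b) and (c). The differences are cosmetic. You use $e^{2i\alpha}=(1+it)/(1-it)$ instead of the paper's $e^{i\alpha}$, which adds a rescaling step from $\alpha/\pi$ to $\alpha/(2\pi)$; you handle that step correctly. Your claim that the leading terms do not cancel is true, but for odd $n$ it needs a short check: the $t^n$-coefficients $(-1)^{(n-1)/2}$ and $n^2(-1)^{(n-1)/2}$ differ because $n\neq 1$.
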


We observe that items $(a)$ and $(b)$ of the previous theorem extend the result of Cyr~\cite{Cyr2012}, while item $(c)$ provides the transcendence of the solution $\alpha$ itself.

The paper is organized as follows. In Section \ref{sec:pre}, we introduce the main tools used in the proof of Theorem \ref{thm:general}, including several algebraic ingredients, as well as the Baker-Wüstholz Theorem, which provides a lower bound for a non-zero linear forms in logarithms, the Gelfond-Schneider Theorem and the Hermite-Lindemann Theorem, which are two remarkable results in transcendence theory. Section \ref{sec:proof} contains the proof of the main result (Theorem \ref{thm:general}). In Section \ref{sec:eqtan}, we apply the main result, together with the remarkable theorems presented in Section \ref{sec:pre}, to the solutions of equation \eqref{eq:main} in order to prove Theorem \ref{thm:3props}. Finally, in Section \ref{sec:conclusion}, we summarize the results obtained in this paper and present an open problem.

\section{Preliminaries}\label{sec:pre}

In this paper, we use the standard notation and properties from algebra without further mention. For instance, if $K, L$ are fields containing $\mathbb{Q}$ and $K \subset L$, then we write $L/K$, and in this case $[L:K]$ denotes the {\em degree} of this extension. We further say that the extension $L/K$ is {\em finite} if $[L:K] < \infty$. If $M$ is an intermediate field, that is, $K \subset M \subset L$, then $[L:K] = [L:M] \cdot [M:K]$ (Tower Property). An element $z \in L$ is {\em algebraic over $K$} if there exists $f(X) \in K[X] \setminus \{0\}$ such that $f(z) = 0$. Otherwise, $z$ is said to be {\em transcendental over $K$}. In the special case that $K = \mathbb{Q}$, we only write {\em algebraic/transcendental}, and denote by $\overline{\mathbb{Q}} \subset \mathbb{C}$ the set of algebraic numbers. The extension $L/K$ is said to be {\em algebraic} if every $z \in L$ is algebraic over $K$. Any finite extension is algebraic, but the converse does not hold in general. If $z$ is algebraic over $K$, then the {\em minimal polynomial} $m_{z,K}(X) \in K[X] \setminus \{0\}$ of $z$ over $K$ is the lowest degree non-zero polynomial that admits $z$ as a zero. We denote by $K(z_1,\dots,z_\ell)$ the smallest extension of $K$ containing $z_1,\dots,z_\ell$. In this case, the Tower Property ensures that $[K(z_1,\dots,z_\ell):K] < \infty$. In particular, if $z$ is algebraic over $K$, then $[K(z):K] = \deg (m_{z,K})$. If $z$ is algebraic, then multiplying $m_{z,\mathbb{Q}}(X)$ by some non-zero integer, we may assume that $m_{z,\mathbb{Q}}(X) \in \mathbb{Z}[X]$. If $m_{z,\mathbb{Q}}(X)$ is monic, then we say that $z$ is an {\em algebraic integer}. For more information on the properties of algebraic numbers, see \cite{DummitFoote,Lang,Milne}.


Let $z \in \mathbb{C}$, $z \neq 0$. Fix the {\em principal branch of the logarithm} $\Log(z) = \ln|z| + i\Arg(z)$, where $\Arg(z) \in (-\pi,\pi]$. If $|z|=1$, then $\Log z = i \Arg(z)$. In particular, $\Log(-1) = i\pi$. The {\em exponential} is defined by $a^b = e^{b \Log(a)}$, where $a,b \in \mathbb{R}$, $a \neq 0$.

\subsection{Baker-Wüstholz Theorem}

In order to prove Theorem \ref{thm:general}, we use the Baker-Wüstholz Theorem, an algebraic result that plays a central role in the theory of linear forms in logarithms. It yields an effective lower bound for expressions like $\Lambda = b_1 \Log(\alpha_1) + \dots + b_m \Log(\alpha_m)$, where $\alpha_1, \dots, \alpha_m$ are algebraic numbers distinct from $0$ and $1$, and $b_1, \dots, b_m \in \mathbb{Z}$. For this, we need the following.

\begin{definition}
    Let $z \in \overline{\mathbb{Q}}$, and let 
    $$m_{z,\mathbb{Q}}(X) = a_0X^d + a_1X^{d-1} + \dots + a_d \in \mathbb{Z}[X]$$
    chosen primitive (that is, $\gcd(a_0,\dots,a_d) = 1$) with $a_0 > 0$. We denote by 
    $$z = z^{(1)}, \; z^{(2)}, \; \dots, \; z^{(d)} \in \mathbb{C}$$
    the conjugates of $z$ (that is, the other zeros of $m_{z,\mathbb{Q}}(X)$). Moreover, for $b_1,\dots,b_m \in \mathbb{Z}$, not all zero, 
    consider the linear form given by 
    $$L(z_1,\dots,z_m) = b_1z_1 + \dots + b_mz_m.$$
    
    \begin{enumerate}[label=\roman*.]
        \item The {\em logarithmic Weil height of $z$} is defined by
        \begin{equation}\label{eq:height}
            h(z) = \frac{1}{d} \left( \ln a_0 + \sum_{j=1}^{d} \ln \max\left\{1,|z^{(j)}|\right\} \right).
        \end{equation}
        In addition, for $z = 0$, we set $h(0) = 0$.

        \item The {\em logarithmic Weil height of $L$} is defined by 
        \begin{equation*}
            h(L) = \ln \max \left\{ |b_1|, \dots, |b_m| \right\}.
        \end{equation*}

        \item The {\em modified logarithmic height of $z$} is defined by 
        \begin{equation*}
            h'(z) = 
            \max\{ h(z), |\Log z|, 1 \}.
        \end{equation*}
    
        \item The {\em modified logarithmic height of $L$} is defined by 
        \begin{equation*}
            h'(L) = 
            \max\{h(L),1\}.
        \end{equation*}
    \end{enumerate}
\end{definition}

It is known \cite[Chapter~3]{Waldschmidt2000} that \eqref{eq:height} does not depend on the primitive representative of $f$. This ensures that the logarithmic Weil height is well defined.

\begin{example}
    \begin{enumerate}[label=\roman*.]
        \item If $z = \frac{p}{q}$ is rational, where $\gcd(p,q)=1$ and $q>0$, then 
        $$h\left(\frac{p}{q}\right) = \ln \max\left\{|p|,q\right\} = h'\left(\frac{p}{q}\right).$$
        \item If $z$ is an algebraic integer, then the minimal polynomial $m_{z,\mathbb{Q}}(X)$ is monic. In this case, 
        $$h(z) = \frac{1}{d} \sum_{j=1}^{d} \ln \max\left\{1,|z^{(j)}|\right\} \quad \text{ and } \quad h'(z) \ge 1.$$ 
        This implies that only conjugates with modulus larger than $1$ contribute to $h(z)$.
        \item If $\zeta \in \mathbb{C}$ is a root of unity, then the conjugates are roots of unity as well, therefore they have modulus $1$. This implies that $h(\zeta) = 0$. In particular, 
        $$h(-1) = 0 \quad \text{ and } \quad h'(-1) = 1.$$ 
        \item For $z \in \mathbb{C}$ and $n \in \mathbb{Z}$, the conjugates of $z^n$ are $(z^{(1)})^n, \dots, (z^{(d)})^n$, counted with multiplicity. Therefore,
        $$h(z^n) = \frac{1}{d} \sum_{j=1}^{d} \ln \max\left\{1,|(z^{(j)})^n|\right\} = |n| \cdot h(z).$$
    \end{enumerate}
\end{example}

We are now in a position to state the main theorem of this section.

\begin{theorem}[Baker-Wüstholz \cite{BakerWuestholz1993}]\label{thm:bw}
    
    Let $\alpha_1, \dots, \alpha_m \in \overline{\mathbb{Q}} \setminus \{0,1\}$, and let $b_1, \dots, b_m \in \mathbb{Z}$, not all zero. Consider the linear form given by 
    $$L(z_1,\dots,z_m) = b_1z_1 + \dots + b_mz_m,$$
    and set 
    $$\Lambda = L(\Log \alpha_1,\dots,\Log \alpha_m) = b_1 \Log(\alpha_1) + \dots + b_m \Log(\alpha_m).$$
    If $\Lambda \neq 0$, then 
    \begin{equation}\label{ineq:Lambda}
        \ln|\Lambda| > -C(m,d) \cdot h'(\alpha_1) \cdot {\dots} \cdot h'(\alpha_m) \cdot h'(L),
    \end{equation}
    where $d = [\mathbb{Q}(\alpha_1,\dots,\alpha_m):\mathbb{Q}]$ and $C(m,d) > 0$ is an effective constant depending only on $m$ and $d$. 
\end{theorem}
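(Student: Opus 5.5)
Since this is a deep result from the literature, I do not intend to reprove it from scratch. The plan is to derive the stated form from the published statement in \cite{BakerWuestholz1993} by matching normalizations, with only the dependence on $m$ and $d$ kept explicit. Baker and Wüstholz prove that if $\Lambda \neq 0$, then
\[
\ln|\Lambda| > -(16md)^{2(m+2)}\, h^{*}(\alpha_1)\cdots h^{*}(\alpha_m)\, h^{*}(L),
\]
where $d=[\mathbb{Q}(\alpha_1,\dots,\alpha_m):\mathbb{Q}]$. Their modified heights are
\[
h^{*}(\alpha)=\tfrac{1}{d}\max\{H(\alpha),|\Log\alpha|,1\},
\qquad
h^{*}(L)=\max\{h(L),1\},
\]
and $H$ is the (non-normalized) logarithmic height relative to the field $\mathbb{Q}(\alpha_1,\dots,\alpha_m)$.

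\textbf{Step 1: compare the heights.} The first step is to relate $H(\alpha_j)$ to the absolute logarithmic Weil height $h(\alpha_j)$ of \eqref{eq:height}. Let $d_j=[\mathbb{Q}(\alpha_j):\mathbb{Q}]$, which divides $d$ by the Tower Property. Then each conjugate of $\alpha_j$ appears $d/d_j$ times among the embeddings of the compositum. This gives $H(\alpha_j)=d\,h(\alpha_j)$. Consequently $h^{*}(\alpha_j)\le \max\{h(\alpha_j),|\Log \alpha_j|,1\}=h'(\alpha_j)$, because $1/d\le 1$ on the other two entries. The height of the linear form needs no adjustment, since $h^{*}(L)=h'(L)$.

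\textbf{Step 2: absorb the constants.} Substituting these bounds into the published inequality yields \eqref{ineq:Lambda} with $C(m,d)=(16md)^{2(m+2)}$. This constant is effective and depends only on $m$ and $d$. Any other convention found in the literature, such as a different $\max$ normalization or a variant using $\ln$ of the naive height of $L$, only changes the bound by factors depending on $m$ and $d$. Such factors are absorbed into $C(m,d)$.

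\textbf{Main obstacle.} The only delicate point is bookkeeping with normalizations, namely absolute versus relative heights and the factor $1/d$. Getting this wrong would affect only the constant, not the shape of the bound, and the shape is all that will be used later.

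For the substance of the theorem, I would simply recall the outline of Baker's method rather than reproduce it. One builds an auxiliary function $\Phi(z)=\sum p(\lambda)\,\alpha_1^{\lambda_1 z}\cdots\alpha_m^{\lambda_m z}$ with integer coefficients, via Siegel's lemma, so that $\Phi$ vanishes to high order at many integer points. The smallness of $\Lambda$ is then used to extrapolate these zeros through Schwarz-lemma estimates. Finally, Wüstholz's multiplicity estimate on group varieties rules out the resulting excess of zeros. That multiplicity estimate is what produces the clean product of heights with the constant $(16md)^{2(m+2)}$, and it is the genuine core of \cite{BakerWuestholz1993}.
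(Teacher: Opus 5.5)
Your proposal is correct and matches the paper, which simply quotes the theorem from \cite{BakerWuestholz1993} without proof. Your normalization check justifies the form stated here: the published modified height $\max\{h(\alpha_j),|\Log\alpha_j|/d,1/d\}$ is at most the paper's $h'(\alpha_j)$, and the height of the linear form does not exceed $h'(L)$. One small bookkeeping point is that the identity $H(\alpha_j)=d\,h(\alpha_j)$ in Step~1 also relies on the non-archimedean places of $\mathbb{Q}(\alpha_1,\dots,\alpha_m)$, which supply the $\frac{d}{d_j}\ln a_0$ term coming from \eqref{eq:height}; the $d/d_j$-fold repetition of the archimedean conjugates alone does not account for it.
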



The positivity of $C(m,d)$ is essential. Since $h'(\alpha_i) \ge 0$ for every $1 \le i \le m$ and $h'(L) \ge 0$, inequality \eqref{ineq:Lambda} only makes sense if the right-hand side is a negative number (a lower bound for $\ln|\Lambda| < 0$). If $C(m,d)$ were either zero or negative, the statement would become false or trivial. 
More explicit versions of $C(m,d)$ were obtained later, notably by Matveev \cite{Matveev2000}, who gives effective (albeit very large) bounds suitable for concrete applications to Diophantine equations.


\subsection{Gelfond-Schneider Theorem}

A fundamental result in transcendence theory is the Gelfond-Schneider Theorem, which famously resolves Hilbert’s seventh problem. It addresses the arithmetic nature of numbers of the form $a^b$, where $a$ and $b$ are algebraic numbers, and plays a central role in the study of exponential expressions with algebraic parameters, providing a powerful criterion for transcendence.

\begin{theorem}[Gelfond-Schneider 
{\cite[Chapter~X]{Niven1956}}]\label{thm:GS}
    Let $a \in \overline{\mathbb{Q}} \setminus \{0,1\}$ and let $b \in \overline{\mathbb{Q}} \setminus \mathbb{Q}$. Then $a^b$ is transcendental.
\end{theorem}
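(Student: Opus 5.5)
Since this theorem is classical and is only quoted in the paper, I would not try to derive it from Theorem~\ref{thm:bw}. As stated, Theorem~\ref{thm:bw} only allows integer coefficients $b_i$, and the relation $b\Log a-\Log(a^b)=0$ has an algebraic irrational coefficient. Instead I would follow Gelfond's auxiliary-function method and argue by contradiction. Assume $a\in\overline{\mathbb{Q}}\setminus\{0,1\}$, $b\in\overline{\mathbb{Q}}\setminus\mathbb{Q}$, and that $c=a^b=e^{b\Log a}$ is algebraic. Put $\lambda=\Log a\neq 0$ and $K=\mathbb{Q}(a,b,c)$, with $D=[K:\mathbb{Q}]$. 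Because $b\notin\mathbb{Q}$, the frequencies $\lambda(k+\ell b)$ with $k,\ell\in\mathbb{Z}_{\ge 0}$ are pairwise distinct. For integers $s_1,s_2$ and $t\ge 0$ we have
$$\lambda^{-t}\frac{d^t}{dz^t}e^{\lambda(k+\ell b)z}\Big|_{z=s_1+s_2b}=(k+\ell b)^t\,a^{ks_1}c^{ks_2}a^{\ell s_1b}\cdots,$$
and the point is that $e^{\lambda(k+\ell b)(s_1+s_2 b)}=a^{ks_1}c^{ks_2+\ell s_1}(a^{b^2})^{\ell s_2}$. To stay inside $K$, I would therefore use only the points $z=s$ with $s\in\mathbb{Z}$ together with derivatives, as Gelfond does. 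At such points $e^{\lambda(k+\ell b)s}=a^{ks}c^{\ell s}\in K$. The normalized derivative values then lie in $K$, with denominators and house bounded by $\exp(O(T\ln L+ LS))$.

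\textbf{Step 1 (auxiliary function).} Form $F(z)=\sum_{k,\ell=0}^{L-1}p_{k\ell}\,e^{\lambda(k+\ell b)z}$ with rational integers $p_{k\ell}$, not all zero. Require $F^{(t)}(s)=0$ for $0\le t<T$ and $1\le s\le S$. This is a linear system over $K$ with $L^2$ unknowns and $DTS$ rational equations after taking a basis. Choose $L^2\ge 2DTS$, for instance $T\approx N^2$, $S\approx N$, $L\approx N^{3/2}\sqrt{2D}$ up to constants. Siegel's lemma then gives a nontrivial solution with $\max|p_{k\ell}|\le \exp(O(N^{5/2}\ln N))$.

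\textbf{Step 2 (extrapolation).} Show inductively that $F^{(t)}(s)=0$ on a larger range of $s$ (or $t$). Take the first nonvanishing value $\gamma=\lambda^{-t}F^{(t)}(s)\in K$. For the upper bound, apply the maximum modulus principle to $F^{(t)}(z)/\prod_{s'}(z-s')^{T}$ on a disc of radius $R\gg S$; this gives $|\gamma|\le \exp(-c\,TS\ln R)$ times the growth term $e^{O(LR)}$. For the lower bound, the Liouville inequality says that the norm of a nonzero algebraic integer has absolute value at least $1$, so $|\gamma|\ge \exp(-O(D(T\ln L+LS+\ln\max|p|)))$. With the parameters balanced, the two bounds are incompatible, so $\gamma=0$. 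Iterating yields vanishing of $F^{(t)}(s)$ for $0\le t<T$ and all $1\le s\le S'$ with $S'$ large.

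\textbf{Step 3 (zero estimate).} An exponential polynomial with $L^2$ distinct frequencies, not identically zero, cannot have zeros of total multiplicity far exceeding $L^2$ in a bounded disc. One sees this either from a Vandermonde/Wronskian nonvanishing argument using the distinctness of the $\lambda(k+\ell b)$, or from Jensen's formula combined with a lower bound for $|F|$ at some point. Since $TS'\gg L^2$, this contradicts $F\not\equiv 0$. I expect the main obstacle to be Step 2: choosing $L,T,S,R$ so that Siegel's lemma, the Schwarz-lemma decay and the Liouville bound all fit together. I would first try the standard choice $R=S^2$ with the exponents above and check that $TS\ln R$ dominates $LR+T\ln L$.
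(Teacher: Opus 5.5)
The paper gives no proof of this statement; it quotes it from \cite[Chapter~X]{Niven1956}. So the question is whether your sketch of Gelfond's method closes on its own. The route is the classical one, but as written it does not close, and the decisive gap is Step~3 rather than the parameter balancing you flag.

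The zero estimate you state is false in that form. The number of zeros of $\sum_j p_j e^{w_j z}$ in a disc of radius $R$ cannot be bounded by the number of frequencies alone: $e^{2\pi i z}-1$ has two frequencies and a zero at every integer. A correct bound (Tijdeman's) is $3(N-1)+4R\max_j|w_j|$. None of the justifications you offer delivers it:
\begin{itemize}
\item A Vandermonde argument in the points $s$ needs the numbers $e^{w_j}=a^kc^\ell$ to be pairwise distinct. This fails whenever $a$ is a root of unity. For $a=-1$, the very case used in Corollary~\ref{cor:alpha2pi_dichotomy}, one has $(-1)^kc^\ell=(-1)^{k+2}c^\ell$.
\item A confluent Vandermonde in the distinct $w_{k\ell}=\lambda(k+\ell b)$ needs $L^2$ derivatives at a single point, which extrapolation in $s$ never produces.
\item The Jensen route needs an explicit lower bound for $|F|$ near some point, which you leave unproved.
\end{itemize}

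Step~2 also fails as proposed. With $R=S^2$ and your exponents, the growth term $LR\asymp N^{7/2}$ beats the Schwarz gain $TS\ln R\asymp N^3\ln N$. Moreover, $F^{(t)}$ vanishes only to order $T-t$ at each $s'$. So you must divide by $(z-s')^{T-t}$ and give up derivatives, and the range $0\le t<T$ is not preserved.

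With your parameters ($T\asymp N^2$, $S\asymp N$, $L^2\asymp 2DN^3$, $\ln P=O(N^{5/2})$) you can skip extrapolation altogether.
\begin{itemize}
\item Let $m$ be the least order of vanishing of $F$ among the points $1,\dots,S$. Then $m\ge T$ by construction.
\item Also $m<L^2$. If $F^{(t)}(s_0)=0$ for all $t<L^2$, the Vandermonde matrix in the distinct numbers $w_{k\ell}$ forces $p_{k\ell}=0$ for all $k,\ell$. This uses only $b\notin\mathbb{Q}$ and $a\neq 1$.
\item Pick $s_0$ with $\gamma=\lambda^{-m}F^{(m)}(s_0)\neq0$. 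Apply the maximum modulus principle to $F(z)\prod_{s=1}^{S}(z-s)^{-m}$ on $|z|=3S$, where $|z-s|\ge 2S$. Using $F^{(m)}(s_0)=m!\,G(s_0)\prod_{s\ne s_0}(s_0-s)^m$, $m!\le m^m$ and $\ln m<2\ln L$, you get $\ln|\gamma|\le -mS\ln 2+O(m\ln L+N^{5/2})$.
\item The Liouville bound gives $\ln|\gamma|\ge -O(m\ln L+LS+\ln P)=-O(m\ln L+N^{5/2})$.
\end{itemize}
Since $S\asymp N\gg\ln L$ and $mS\ge TS\asymp N^3$, the two bounds are incompatible for large $N$.

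Alternatively, keep your extrapolation but take $R$ proportional to the new range $S'$, with $S'$ a suitable constant multiple of $DN$ (about $13DN$). Then finish with Tijdeman's bound: its term $S'\max|w_{k\ell}|\asymp N^{5/2}$ is $o(L^2)$, while you have at least $\tfrac{T}{2}S'>3L^2$ zeros.
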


As a consequence of the previous theorem, we obtain the following result.

\begin{corollary}\label{cor:alpha2pi_dichotomy}
    Let $\alpha \in \mathbb{R}$ and suppose that $\beta = e^{i\alpha} \in \overline{\mathbb{Q}}$. Then $\frac{\alpha}{\pi}$ is either rational or transcendental. In particular, $\frac{\alpha}{\pi}$ cannot be an irrational algebraic number.
\end{corollary}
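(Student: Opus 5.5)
The plan is to argue by contradiction and apply Theorem~\ref{thm:GS} with base $a=-1$. Set $\theta=\frac{\alpha}{\pi}$ and suppose that $\theta$ is algebraic but irrational. Since $\theta$ is real, algebraic and irrational, it lies in $\overline{\mathbb{Q}}\setminus\mathbb{Q}$. The base $-1$ is algebraic and different from $0$ and $1$. Theorem~\ref{thm:GS} then says that $(-1)^{\theta}$ is transcendental.

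The main step is to compute $(-1)^\theta$ with the paper's convention $a^b=e^{b\Log(a)}$. Here $\Log(-1)=i\pi$, so
$$(-1)^{\theta}=e^{\theta\, i\pi}=e^{i\alpha}=\beta.$$
By hypothesis $\beta$ is algebraic. This contradicts the transcendence of $(-1)^\theta$, so $\theta$ cannot be an irrational algebraic number. Hence $\theta$ is either rational or transcendental, which is exactly the dichotomy claimed. The ``in particular'' sentence is just a restatement of this.

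The only delicate point is branch bookkeeping. Theorem~\ref{thm:GS} is stated for $a^b$ defined through the principal logarithm, with $b$ complex algebraic. Here $b=\theta$ is real, but the paper defines $a^b$ only for real $a$ and $b$, and the statement is used with exactly that definition. Since $\Log(-1)=i\pi$ exactly, the identity $(-1)^\theta=e^{i\alpha}$ holds on the nose, not merely up to a factor $e^{2\pi i k\theta}$. So no ambiguity arises and the argument goes through without further work.
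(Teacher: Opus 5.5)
Your proof is correct and is essentially the paper's argument: assume $\frac{\alpha}{\pi}\in\overline{\mathbb{Q}}\setminus\mathbb{Q}$, observe that with the principal branch $(-1)^{\alpha/\pi}=e^{\frac{\alpha}{\pi}\Log(-1)}=e^{i\alpha}=\beta$ is algebraic, and contradict Theorem~\ref{thm:GS} with base $-1$. Your remark on branch bookkeeping is a reasonable extra check, not a different route.
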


\begin{proof}
    Let $\alpha \in \mathbb R$ and suppose that $\beta = e^{i\alpha} \in \overline{\mathbb{Q}}$. With the principal branch of the logarithm fixed, it follows that
    $$(-1)^{\frac{\alpha}{\pi}} = e^{\frac{\alpha}{\pi}\Log(-1)} = e^{i\alpha} = \beta.$$
    Suppose that $\frac{\alpha}{\pi} \in \overline{\mathbb{Q}} \setminus \mathbb{Q}$. By Gelfond-Schneider Theorem (Theorem \ref{thm:GS}), $(-1)^{\frac{\alpha}{\pi}}$ is transcendental, a contradiction.
\end{proof}

\subsection{Hermite-Lindemann Theorem}

Another fundamental result in transcendence theory is the Hermite-Lindemann Theorem, which deals with the exponential function evaluated at algebraic arguments. In particular, it implies the transcendence of classical constants such as $e$ and $\pi$. This theorem provides a basic and widely used tool and serves as a foundation for more refined results involving logarithms and linear forms. Here we present a simplified form.

\begin{theorem}[Hermite-Lindemann {\cite[Chapter~IX]{Niven1956}}]\label{thm:HL}
   Let $\alpha \in \overline{\mathbb{Q}} \setminus \{0\}$. Then $e^{\alpha}$ is transcendental. 
\end{theorem}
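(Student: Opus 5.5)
This is the classical Hermite--Lindemann theorem, and none of the earlier results yields it. Theorem \ref{thm:bw} only treats integer combinations of logarithms, while here $\alpha$ itself occurs as an algebraic coefficient. So I would follow the classical auxiliary-function method of Hermite, as in \cite[Chapter~IX]{Niven1956}.

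\emph{Step 1: an exponential relation.} Suppose, for contradiction, that $\gamma=e^{\alpha}$ is algebraic with $\alpha\neq 0$ algebraic. Let $a_0+a_1X+\dots+a_dX^d\in\mathbb{Z}[X]$ with $a_0a_d\neq 0$ vanish at $\gamma$. Then $\sum_{k=0}^{d} a_k e^{k\alpha}=0$.

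\emph{Step 2: symmetrize over conjugates.} Let $\alpha=\alpha_1,\dots,\alpha_r$ be the conjugates of $\alpha$. Take the product over $i=1,\dots,r$ of the expressions $\sum_k a_k e^{k\alpha_i}$; this product is $0$ because its first factor is. Expanding gives $\sum_{\mathbf k} a_{k_1}\cdots a_{k_r}\, e^{k_1\alpha_1+\dots+k_r\alpha_r}=0$.
\begin{itemize}[label={}]
\item \emph{Rationality of coefficients.} Group equal exponents. The Galois group permutes the $\alpha_i$, so the resulting relation $\sum_j b_j e^{\beta_j}=0$ can be arranged with integer coefficients $b_j$ and with exponents $\beta_j$ closed under conjugation, grouped into full conjugate classes with constant $b_j$ on each class.
\item \emph{Nontriviality.} Order the complex exponents lexicographically by real and then imaginary part. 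The maximal exponent $d(\alpha_1+\dots+\alpha_r)$ arises only from $k_1=\dots=k_r=d$, so its coefficient $a_d^r\neq 0$. Hence the relation is nontrivial.
\item \emph{Normalization.} After multiplying by a suitable $e^{-\beta}$ if needed, I would arrange a nonzero coefficient $b_0$ attached to the exponent $0$, with all other $\beta_j\neq 0$. Justifying this cleanly is the first place care is needed.
\end{itemize}

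\emph{Step 3: Hermite's integrals.} Fix a large prime $p$ and a positive integer $\ell$ with $\ell\beta_j$ algebraic integers for all $j$. Let $s$ be the number of nonzero exponents $\beta_1,\dots,\beta_s$ and set
$$f(x)=\ell^{sp}x^{p-1}\prod_{j\ge1}(x-\beta_j)^p,\qquad I(t)=\int_0^t e^{t-u}f(u)\,du=e^t\sum_{m\ge0}f^{(m)}(0)-\sum_{m\ge0}f^{(m)}(t),$$
and form $J=\sum_j b_j I(\beta_j)$. By the relation from Step 2, the $e^{\beta_j}$-terms cancel, leaving
$$J=-\sum_j\sum_m b_j f^{(m)}(\beta_j).$$
Since the exponents are grouped in conjugate classes with equal $b_j$, $J$ is a symmetric function of the algebraic integers $\ell\beta_j$, hence a rational integer.

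\emph{Step 4: the divisibility argument.} All contributions are divisible by $p!$, except $-b_0f^{(p-1)}(0)=-b_0(p-1)!\,\ell^{sp}\prod_j(-\beta_j)^p$. The product $\prod_j(\ell\beta_j)^p$ is a nonzero integer, being a norm. Thus for $p$ larger than $|b_0|$, $\ell$ and that norm, $J$ is a nonzero integer divisible by $(p-1)!$ but not by $p$, so $|J|\ge (p-1)!$. On the other hand, the trivial estimate $|I(t)|\le |t|e^{|t|}\max_{|u|\le|t|}|f(u)|$ gives $|J|\le C^p$ for a constant $C$ independent of $p$. This contradicts the lower bound for large $p$.

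I expect the main obstacle to be Step 2. One must verify that after grouping, the coefficients really are constant on conjugate classes and rational, which requires applying the Galois action to the full product. One must also make sure the nonvanishing term $b_0 f^{(p-1)}(0)$ is isolated correctly. If the normalization to exponent $0$ proves awkward, I would instead shift $f$ to vanish to order $p-1$ at a chosen conjugate class. I would then verify the non-divisibility by $p$ of the resulting norm-type integer, which is the same argument in slightly heavier notation.
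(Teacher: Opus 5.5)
The paper gives no proof of this statement; it only cites Niven, so your sketch has to stand on its own. Steps 1, 3 and 4 are the standard Hermite--Niven computation and are fine. Step 2, however, contains a false claim and a genuine gap.

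\textbf{The nontriviality claim is false as stated.} The exponent $d(\alpha_1+\dots+\alpha_r)$ is lexicographically maximal only if every conjugate satisfies $\alpha_i\succ 0$. For $\alpha=\sqrt2$ it equals $0$ and is produced by every pair $(k,k)$. The true leading exponent is $d\sum_{\alpha_i\succ0}\alpha_i$. It arises only from $k_i=d$ for $\alpha_i\succ0$ and $k_i=0$ for $\alpha_i\prec0$, with coefficient $a_d^{u}a_0^{r-u}\neq0$, where $u$ is the number of conjugates with $\alpha_i\succ 0$. This is easily repaired, and it is exactly where $a_0\neq0$ is needed.

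\textbf{The genuine gap is the normalization $b_0\neq 0$.} It is the only thing that makes $J\neq 0$ in Step 4, so it is the heart of the proof, not a technicality.
\begin{itemize}
\item Multiplying by $e^{-\beta}$ with $\beta\notin\mathbb{Q}$ destroys closure under conjugation and the constancy of the $b_j$ on classes. You need both to get $J\in\mathbb{Z}$ and to make $\prod_j\ell\beta_j$ a norm.
\item Shifting by a rational $\beta$ is harmless, but nothing guarantees a rational exponent with nonzero coefficient. The coefficient of $e^{0}$ is $\sum_{\sum_i k_i\alpha_i=0}a_{k_1}\cdots a_{k_r}$, and it can cancel once the conjugates satisfy $\mathbb{Q}$-linear relations. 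For a cubic $\alpha$ of trace $0$ (e.g.\ a root of $x^3-x-1$) it equals $\sum_k a_k^3$. This vanishes for the irreducible polynomial $3+5X+4X^2-6X^3$. In a proof by contradiction you cannot exclude such cancellation.
\item Your fallback fails as well. If $f$ vanishes to order $p-1$ on a whole class $C$, the only term not visibly divisible by $p!$ is $-b_C\sum_{\gamma\in C}f^{(p-1)}(\gamma)$. This is a trace, not a norm, and it can vanish identically. Take exponents $\{0,\pm\sqrt2\}$ and $C=\{\pm\sqrt2\}$. Then $f$ is a constant multiple of $x^p(x^2-2)^{p-1}$, which is odd. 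Hence $f^{(p-1)}$ is odd and the two contributions cancel for every odd $p$, leaving $J\equiv 0 \pmod{p!}$.
\end{itemize}

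\textbf{Two standard repairs.} Either one closes the gap.
\begin{itemize}
\item Symmetrize with $\prod_i F(\alpha_i)F(-\alpha_i)$, where $F(z)=\sum_k a_k e^{kz}$. Work in the group ring of $\mathbb{Z}\alpha_1+\dots+\mathbb{Z}\alpha_r$ and write $\prod_i F(\alpha_i)=\sum_\gamma c_\gamma[\gamma]\neq 0$. The new product is still Galois-invariant and has constant term $\sum_\gamma c_\gamma^2>0$.
\item Use Baker's device. For each exponent $\beta_k$ in a class with nonzero coefficient, take $f_k$ vanishing to order $p-1$ at $\beta_k$ only and to order $p$ at all other exponents. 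Form $J_k$, and work with the rational integer $\prod_k J_k$, which is a genuine norm.
\end{itemize}
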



\section{Proof of Theorem \ref{thm:general}}\label{sec:proof}


Let $\alpha \in (-\pi,\pi]$ and suppose that $\beta = e^{i\alpha} \in \overline{\mathbb{Q}}$ is not a root of unity. Fixed the principal branch of the logarithm, we have that $\Arg(\beta) = \alpha$, hence $\Log(\beta) = i\alpha$.

For $z_1,z_2 \in \mathbb{C}$, $p \in \mathbb{Z}$ and $q \in \mathbb{N}$, set $L(z_1,z_2) = qz_1 - 2pz_2$ and 
$$\Lambda(q,p) = L(\Log(\beta),\Log(-1)) = q \Log(\beta) - 2p \Log(-1)$$
to be a linear form in logarithms with algebraic numbers $\{\beta,-1\}$. 
Furthermore, we have that
\begin{equation}\label{eq:Lqp}
    \Lambda(q,p) = i\left( q\alpha - 2\pi p \right).
\end{equation}
Let $\theta = \frac{\Arg(\beta)}{2\pi} = \frac{\alpha}{2\pi} \in \left( -\frac{1}{2},\frac{1}{2} \right]$. It follows that 
\begin{equation}\label{eq:|Lambda|}
    |\Lambda(q,p)| = |q\alpha - 2\pi p| = 2\pi q \left| \theta - \frac{p}{q} \right|.
\end{equation}

\vspace{1mm}

\claimA $\Lambda(q,p) \neq 0$. 

\vspace{1mm}

\proofoftheclaim Suppose otherwise that $\Lambda(q,p) = 0$. Then $q \Log(\beta) = 2p \Log(-1)$, which implies that $e^{q\Log(\beta)} = e^{2p \Log(-1)}$, and so $\beta^q = 1$, a contradiction. \qed [Claim A.]

\vspace{1mm}

We now apply Baker-Wüstholz Theorem (Theorem \ref{thm:bw}) with $(\alpha_1, \alpha_2, b_1, b_2) = (\beta, -1, q, -2p)$. Let $d = [\mathbb{Q}(\beta):\mathbb{Q}]$. Since 
$$h'(L) = 
\max \left\{ \ln(\max\{q,|2p|\}), 1\right\} = 
\ln(\max\{q,|2p|,e\}) ,$$ 
obtain that 
$$\ln|\Lambda(q,p)|  > -C_0 \ln(\max\{q,|2p|,e\})$$
with $C_0 = C_0(\beta) = C(2,d) h'(\beta) h'(-1) 
> 0$ depending only on $\beta$. Therefore 
$$|\Lambda(q,p)| > (\max\{q,|2p|,e\})^{-C_0}.$$
By \eqref{eq:|Lambda|}, we obtain
\begin{equation}\label{ineq:modulotheta}
    \left| \theta - \frac{p}{q} \right| > \frac{1}{2\pi} \cdot \frac{1}{q(\max\{q,|2p|,e\})^{C_0}} \quad \text{ for every } p \in \mathbb{Z}.
\end{equation}






Notice that $\theta$ is irrational; otherwise $\beta = e^{2\pi i \theta}$ would be a root of unity, which is a contradiction. Therefore, there exists a unique integer $p'$ closest to $q\theta$, that is, $\left|q\theta - p'\right| < \frac{1}{2}$. Since $|p'| < q|\theta| + \frac{1}{2} < \frac{q+1}{2}$, it follows that 
$$\max\{q,2|p'|,e\} < 3q$$
for every $q \in \mathbb{N}$. By \eqref{ineq:modulotheta}, we obtain
$$\left| \theta - \frac{p'}{q} \right| > \frac{1}{2\pi} \cdot \frac{1}{q(3q)^{C_0}} = \frac{1}{2\pi \cdot 3^{C_0}} \cdot \frac{1}{q^{C_0+1}}.$$
Let $c = \frac{1}{2\pi \cdot 3^{C_0}} > 0$ and $\tau = C_0+1 < \infty$. Notice that both $c$ and $\tau$ depend only on $\beta$, and hence only on $\theta$. For every $p \in \mathbb{Z}$ and $q \in \mathbb{N}$, we have that $|q\theta - p| \ge |q\theta - p'|$, therefore
$$\left| \theta - \frac{p}{q} \right| \ge \left| \theta - \frac{p'}{q} \right| \ge \frac{c}{q^\tau}.$$
This is precisely the definition of a Diophantine number.
\qed

\section{The arithmetic aspects of the solutions of $n \tan \alpha = \tan(n \alpha)$}
\label{sec:eqtan}

In this section, we focus on the equation \eqref{eq:main}. As discussed in the introduction, we may assume that $n \ge 2$ and $\alpha \in \left(0,\frac{\pi}{2}\right)$. We begin with some trigonometric tools which will allow us to prove that both $\tan \alpha$ and $e^{i\alpha}$ are algebraic.


Let $t = \tan \alpha$. Using the addition formula $\tan(u+v) = \frac{\tan u + \tan v}{1 - \tan u \tan v}$ and induction on $n$, it follows that there exist polynomials $P_n(t), Q_n(t) \in \mathbb{Z}[t]$ such that 
$$\tan(n \alpha) = \frac{P_n(t)}{Q_n(t)} \quad \text{ for every $n \ge 1$}.$$
Moreover, these polynomials satisfy the recurrence $P_1(t)=t$, $Q_1(t)=1$, 
\begin{equation}\label{eq:PQ}
    P_{n+1}(t) = P_n(t) + tQ_n(t) \quad \text{ and } \quad Q_{n+1}(t) = Q_n(t) - tP_n(t) \quad \text{ for every $n \ge 1$.}
\end{equation}

\begin{proposition}\label{prop:betaalgebraic}
    \begin{enumerate}[label=\roman*.]
        \item If $\alpha$ satisfy \eqref{eq:main}, then $\tan \alpha  \in \overline{\mathbb{Q}}$.
        \item If $t = \tan \alpha  \in \overline{\mathbb{Q}}$, then $\beta = e^{i\alpha} \in \overline{\mathbb{Q}}$.
    \end{enumerate}
\end{proposition}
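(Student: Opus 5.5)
For part (i), I will turn equation \eqref{eq:main} into a polynomial equation in $t=\tan\alpha$ using the polynomials $P_n,Q_n$ from \eqref{eq:PQ}. Since $\alpha\in\left(0,\frac{\pi}{2}\right)$, $t$ is defined. The identity $\tan(n\alpha)=P_n(t)/Q_n(t)$ requires $Q_n(t)\neq 0$. Because $\alpha$ is a solution, $\tan(n\alpha)$ is defined, so $\cos(n\alpha)\neq0$. I will use the closed form $\cos(n\alpha)+i\sin(n\alpha)=(\cos\alpha)^n(1+it)^n$, which gives $Q_n(t)=\operatorname{Re}(1+it)^n$ and $P_n(t)=\operatorname{Im}(1+it)^n$; these agree with the recurrence \eqref{eq:PQ}. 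Hence $Q_n(t)=\cos(n\alpha)/\cos^n\alpha\neq0$.

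Clearing denominators in \eqref{eq:main} then gives
$$F_n(t):=P_n(t)-ntQ_n(t)=0.$$
I must check that $F_n\in\mathbb{Z}[t]$ is not the zero polynomial. From the binomial expansion, $P_n(t)=\sum_k(-1)^k\binom{n}{2k+1}t^{2k+1}$ and $Q_n(t)=\sum_k(-1)^k\binom{n}{2k}t^{2k}$. The coefficient of $t$ in $F_n$ is $n-n=0$, and the coefficient of $t^3$ is $-\binom{n}{3}+n\binom{n}{2}$. For $n\ge2$ this is nonzero, since $n\binom n2>\binom n3$. So $F_n\neq0$, and $t$ is algebraic. This nonvanishing check is the only point needing care; the rest is routine.

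For part (ii), I will use $\beta=e^{i\alpha}=\cos\alpha\,(1+it)$ and $\cos^2\alpha=\frac{1}{1+t^2}$. Then
$$\beta^2=\cos^2\alpha\,(1+it)^2=\frac{(1+it)^2}{1+t^2}=\frac{1+it}{1-it},$$
where $1\pm it\neq0$ because $t$ is real. This lies in $\mathbb{Q}(i,t)$, which is an algebraic extension of $\mathbb{Q}$ since $i$ and $t$ are algebraic. So $\beta^2\in\overline{\mathbb{Q}}$, and therefore $\beta$, as a root of $X^2-\beta^2$, is algebraic by the closure properties of $\overline{\mathbb{Q}}$ recalled in Section~\ref{sec:pre}. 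Alternatively, $\cos\alpha=(1+t^2)^{-1/2}$ is directly algebraic, and then $\beta=\cos\alpha\,(1+it)$ is algebraic as well.
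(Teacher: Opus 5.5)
Your proof is correct and follows essentially the same route as the paper: in (i) you use the same polynomial $P_n(t)-ntQ_n(t)$, and in (ii) you reach the same relation $\beta^2=\frac{1+it}{1-it}$, equivalently $(it-1)\beta^2+(1+it)=0$. Your explicit checks that $Q_n(t)\neq 0$ and that the $t^3$-coefficient $n\binom{n}{2}-\binom{n}{3}=\frac{n(n^2-1)}{3}$ is nonzero tighten points the paper handles only implicitly; its nonvanishing argument just says the identity $\tan(n\alpha)=n\tan\alpha$ cannot hold for all $\alpha$.
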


\begin{proof}
    \begin{enumerate}[label=\roman*.]
        \item From the previous comments, it follows that $P_n(t) - n t Q_n(t) \in \mathbb{Z}[t]$ is a polynomial having $t = \tan \alpha$ as a zero. Moreover, $P_n(t) - n t Q_n(t)$ is non-zero; otherwise \eqref{eq:PQ} would ensure that $\tan(n \alpha) = n \tan \alpha$ for every $n \ge 2$, a contradiction. Therefore $\tan \alpha$ is algebraic.

        \item We have that $\tan \alpha = \frac{e^{i\alpha} - e^{-i\alpha}}{i(e^{i\alpha} + e^{-i\alpha})}$. Thus $\frac{\beta^2-1}{i(\beta^2+1)} = t$ and so $(it-1)\beta^2 + (1+it) = 0$. This implies that $\beta$ satisfies a quadratic equation with coefficients in $\mathbb{Q}(i,t)$. Since $t$ is algebraic, it follows that $\beta$ is algebraic as well.
    \end{enumerate}
\end{proof}

We observe that item i. of the previous proposition relies on the specific form of equation \eqref{eq:main}, whereas item ii. does not. Moreover, the same argument applies to other similar trigonometric equations, as long as one can establish that the corresponding trigonometric parameter is algebraic.

We are now ready to prove Theorem \ref{thm:3props}.

\subsection{Proof of Theorem \ref{thm:3props}(a)}

By Proposition \ref{prop:betaalgebraic}, $\beta = e^{i\alpha} \in \overline{\mathbb{Q}}$ and $|\beta| = 1$. If $\beta$ were a root of unity, say $\beta^q = 1$ for some $q \in \mathbb{N}$, then $1 = e^{i \alpha q} = e^{2\pi i q \frac{\alpha}{2\pi}}$, thus $\frac{\alpha}{2\pi} \in \mathbb{Q}$. This contradicts the result of Cyr \cite{Cyr2012}. Therefore $\beta$ is not a root of unity. Now the proof follows directly from Theorem \ref{thm:general}.
\qed

\subsection{Proof of Theorem \ref{thm:3props}(b)}

Since $\frac{\alpha}{\pi}$ is irrational, this item follows immediately from Corollary \ref{cor:alpha2pi_dichotomy}.
\qed

\subsection{Proof of Theorem \ref{thm:3props}(c)}

If $\alpha \in \overline{\mathbb{Q}}$, then $i\alpha \in \overline{\mathbb{Q}}$. In this case, Hermite-Lindemann Theorem (Theorem \ref{thm:HL}) would imply that $\beta = e^{i\alpha}$ is transcendental, but this contradicts Proposition \ref{prop:betaalgebraic}.
\qed

\section{Concluding remarks}\label{sec:conclusion}

In this paper, we investigated arithmetic properties of real numbers, with particular emphasis on their Diophantine and transcendental nature.

As a consequence of the Baker-Wüstholz Theorem, we proved that if $\beta \in \overline{\mathbb{Q}}$ satisfies $|\beta| = 1$ and is not a root of unity, then $\frac{\Arg(\beta)}{2\pi}$ is Diophantine. Moreover, we showed that if $\beta = e^{i\alpha} \in \overline{\mathbb{Q}}$, then $\frac{\alpha}{\pi}$ is either rational or transcendental.

By combining these results with the classical Gelfond-Schneider and Hermite-Lindemann Theorems, we established strong arithmetic properties for the solutions of the equation
$$n \tan \alpha = \tan(n\alpha),$$
where $n \ge 2$ and $\alpha \in \left(0, \frac{\pi}{2}\right)$. In particular, we proved that the normalized quantity $\frac{\alpha}{2\pi}$ is both Diophantine and transcendental, thereby strengthening the irrationality result of~\cite{Cyr2012}, while $\alpha$ itself is necessarily transcendental. Nevertheless, at present, we have no information on whether $\alpha$ is Diophantine.

\section*{Acknowledgements}

S. Ribas was partially supported by FAPEMIG grants APQ-04712-25 and RED-00133-21, and CNPq grant 420721/2025-8.


\begin{thebibliography}{99}



\bibitem{BakerWuestholz1993}
A.~Baker; G.~Wüstholz,
\emph{Logarithmic forms and group varieties}.
\emph{J. Reine Angew. Math.} \textbf{442} (1993), 19--62.

\bibitem{BialyBorTabachnikov2022}
M.~Bialy, G.~Bor, and S.~Tabachnikov,
\newblock Self-Bäcklund curves in centroaffine geometry and Lamé's equation,
\newblock \emph{Commun. Amer. Math. Soc.} \textbf{2} (2022), 232--282.


\bibitem{Cyr2012}
V.~Cyr,
\newblock A number theoretic question arising in the geometry of plane curves and in billiard dynamics,
\newblock \emph{Proc. Amer. Math. Soc.} \textbf{140} (2012), no.~9, 3035--3040.

\bibitem{DummitFoote}
D.~S.~Dummit and R.~M.~Foote,
\newblock \emph{Abstract Algebra},
\newblock 3rd ed., Wiley, 2004.

\bibitem{Gutkin2010}
E.~Gutkin,
\newblock Capillary floating and the billiard ball problem,
\newblock preprint, arXiv:1012.2448 (2010).

\bibitem{Lang}
S.~Lang,
\newblock \emph{Algebra},
\newblock rev. 3rd ed., Springer, 2002.


\bibitem{Matveev2000}
E.~M.~Matveev,
\emph{An explicit lower bound for a homogeneous rational linear form in the logarithms of algebraic numbers. II}.
\emph{Izvestiya: Mathematics} \textbf{64} (2000), 1217--1269.

\bibitem{Milne}
J.~S.~Milne,
\newblock \emph{Fields and Galois Theory},
\newblock Course notes. Available at: \url{https://www.jmilne.org/math/CourseNotes/FT.pdf}.

\bibitem{Niven1956}
I.~Niven,
\newblock \emph{Irrational Numbers},
\newblock Carus Mathematical Monographs, No.~11, Mathematical Association of America, 1956.

\bibitem{Roth1955}
K.~F.~Roth,
\newblock Rational approximations to algebraic numbers,
\newblock \emph{Mathematika} \textbf{2} (1955), 1--20.





\bibitem{Tabachnikov2006}
S.~Tabachnikov, 
\newblock Tire track geometry: Variations on a theme, 
\newblock \emph{Israel J. Math.} \textbf{151} (2006), 1--28.

\bibitem{Waldschmidt2000}
M.~Waldschmidt,
\newblock \emph{Diophantine Approximation on Linear Algebraic Groups: Transcendence Properties of the Exponential Function in Several Variables},
Grundlehren der mathematischen Wissenschaften, vol.~326,
\newblock Springer-Verlag, Berlin-Heidelberg, 2000.










\end{thebibliography}
\end{document}